\newcommand{\real}{\mathds{R}}
\newcommand{\XX}{\mathbb{X}}
\newcommand{\Ee}{\mathds E}
\newcommand{\Pp}{\mathds P}
\newcommand{\I}{\mathds 1}
\newcommand{\Ff}{\mathcal{F}}
\newcommand{\LL}{\mathcal{L}}
\newcommand{\Bb}{\mathcal{B}}
\def\1{1\!\!\hbox{{\rm I}}}
\newcommand{\sign}{\operatorname{sign}}
\newcommand{\normal}{\color{black}}
\numberwithin{equation}{section}
\theoremstyle{plain}
\newtheorem{theorem}{Theorem}
\newtheorem{lemma}[theorem]{Lemma}
\newtheorem{corollary}[theorem]{Corollary}
\theoremstyle{definition}
\newtheorem{remark}[theorem]{Remark}
\newtheorem{definition}{Definition}
\numberwithin{theorem}{section}
\numberwithin{definition}{section}
\newcommand{\myfnsymbol}[1]{\ifcase#1
	\or\star% 1
	\or\dagger% 2
	\fi
}
\let\oldmaketitle\maketitle
\renewcommand{\maketitle}{\oldmaketitle\setcounter{footnote}{0}}
\begin{document}

	\title{On ergodic properties of some L\'evy-type processes}
	
	\date{}
%	\date{\today}

	\author[V. Knopova]{Victoria Knopova}
	\address[V. Knopova]{Kiev T. Shevchenko University\\ Department of Mechanics and Mathematics\\ Acad. Glushkov Ave., 02000,  Kiev, Ukraine\\ \texttt{Email: $vicknopova$@$knu.ua$ (corresponding author)}}

		\author[Y. Mokanu]{Yana Mokanu}
	\address[Y. Mokanu]{Kiev T. Shevchenko University\\ Department of Mechanics and Mathematics\\ Acad. Glushkov Ave., 02000,  Kiev, Ukraine\\
	\texttt{Email:  $yana.mokanu3075$@$gmail.com$} }
	
	\begin{abstract}
		In this note we prove some sufficient conditions for ergodicity of a L\'evy-type process,  such  that on the test functions  the generator of the respective semigroup  is of the form 
		$$
		Lf(x) = a(x)f'(x) + \int_\real \left( f(x+u)-f(x)- \nabla f(x)\cdot  u \I_{|u|\leq 1} \right) \nu(x,du), \quad f\in C_\infty^2(\real). 
		$$
		Here $\nu(x,du)$ is a L\'evy-type kernel  and $a(\cdot): \real\to \real$. We consider the case when  the tails are of polynomial decay as  well as the case when the decay is (sub)-exponential.
		For the proof  the  Foster-Lyapunov approach is used. 
	\end{abstract}
	
	\subjclass[2010]{\emph{Primary:} 60G17. \emph{Secondary:}  60J25; 60G45.}
	
	\keywords{Ergodicity,  L\'evy-type process, Foster-Lyapunov criteria, Lyapunov function.}

	\maketitle

\section{Introduction}\label{intro}
	Consider an operator 
	\begin{equation}\label{L1}
		Lf(x) = a(x) f'(x) + \int_\real \left( f(x+u)-f(x)- \nabla f(x)\cdot  u \I_{|u|\leq 1} \right) \nu(x,du), \quad f\in C_0^2(\real), 
	\end{equation}
	where $C_0^2(\real)$ is the class of twice continuously differentiable functions with compact support, $a(\cdot): \real \to \real$  and  the kernel  $\nu(x,du)$ satisfies 
	\begin{equation}\label{bdd}
\sup_x  \int_\real (1\wedge |u|^2) \nu(x,du)<\infty. 
	\end{equation}
Under certain conditions (see Section~\ref{Settings}  below) one can show that the martingale problem for $(L, C_0^2(\real))$ is well posed and its solution is a Markov process possessing the Feller property (i.e. the respective semigroup of operators $(P_t)_{t\geq 0}$, $P_t f(x) := \Ee^x f(X_t)$, $f\in C_\infty(\real)$, preserves the space $C_\infty(\real)$ of continuous functions vanishing at infinity).  The respective process $X$ is often called a \emph{L\'evy-type} process, because locally its structure resembles the structure of a L\'evy process, e.g. a  process with independent  stationary  increments. 

The aim of this note is to investigate the ergodicity in total variation  of the process $X_t$ related to \eqref{L1} and to describe the speed of convergence to the respective invariant measure.  
In order to investigate this problem, we need a certain set up.  Following the framework of \cite{Ku17}, if the transition probability kernel satisfies the local Dorbushin condition and if the so-called extended generator of $(P_t)_{t\geq 0}$  satisfies the Lyapunov type   condition  with some Lyapunov function,  then one can reduce the continuous time  setting to the discrete one (cf. \cite[Th.2.8.9, Th.3.2.3]{Ku17}), and apply the ergodic theorems for the skeleton chain  $X^h := \{ X_{nh}, \, n\in \mathbb{Z}_+\}$.  Since the ergodicity of the skeleton chain implies the ergodicity of the initial Markov process with the same rates, we derive the desired statement.  
This problem in the context of  L\'evy-driven SDEs is investigated in \cite{Ku17}, see also \cite{KP19}. On the other hand, not any L\'evy-type process can be derived easily as a solution to some L\'evy-driven SDE, but nevertheless such process are of interests, which motivated the current research. Knowing  the speed of convergence to the ergodic distribution allows us to use simulation methods (see, for example, \cite{B10})   in  order to approximate this distribution in a ``long run''  having the distribution of the initial L\'evy-type process.   

In this note we consider for simplicity the one-dimensional situation.  We believe that under certain control on the distortion in different directions of the L\'evy kernel similar result should  hold in the multi-dimensional settings (see \cite{Ku17} for the respective results on the L\'evy-driven SDEs). However, when the L\'evy type kernel $\nu(x,du)$ is not regular enough, the choice of the Lyapunov function and the proof of the Lyapunov type inequality  is not so obvious. Therefore we postpone this  problem for further investigations.  

The paper is organized as follows.  In Section~\ref{Settings}  we recall the notion of ergodicity and  discuss the assumptions.   Then we give the definition of the full generator and explain which functions are in its domain.   Further, we recall  the  Dobrushin condition and the definitions of  the  Lyapunov function and  of the  Lyapunov type equation. This is the framework necessary for the investigation of  ergodicity.  In Section~\ref{Results} we formulate our results, e.g.  we provide the conditions under which our process is ergodic and get the speed of convergence to the invariant  measure. The speed of convergence heavily relies on the order of the moments of the kernel $\nu(x,du)$, therefore we consider two types of conditions: when we have polynomial decay of the tails  and the ``(sub)-exponential'' decay.  Proofs are given in Section~\ref{Proofs}.

\section{Settings}\label{Settings}

In order to make the paper self-contained, we provide below the necessary notions and give some overview of the existing results. 

Recall that a Markov process $(X_t)_{t\geq 0}$  on $\real$  with transition probability kernel $P_t(x,dy)$ is called
\emph{ergodic} if  there exists an  invariant probability measure $\pi(\cdot)$ such that 
\begin{equation}\label{inv1}
	\lim_{t\to \infty} \|\Pp_t(x,\cdot)- \pi(\cdot)\|_{TV} =0
\end{equation}
for any $x\in \real$. 
Here $\|\cdot\|_{TV}$ is the total variation norm.

There are classical results   on Markov chains (cf. \cite[Ch.2]{Ku17}) establishing ergodicity under  some additional assumptions.  As we mentioned in Section~\ref{intro},  having a continuous time Markov process one can construct the so-called \emph{skeleton chain}, establish the ergodicity for this chain,  which in turn implies the ergodicity of the initial Markov process with the same convergence rates. Therefore, we formulate a few definitions and  results for a Markov  chain and then indicate the ``return'' route to the initial  process.  

Let $X$ be a Markov chain on a state metric space $(\XX,\rho)$, where $\rho$ is the metric on $\XX$,  with transition kernel $\Pp(x,dy)$.

The first condition we need for the ergodicity is  a \emph{local Dobrushin condition}  for $X$. 
We say that a chain $X$ satisfies the local Dobrushin condition (cf. \cite[p. 42]{Ku17}) on a measurable set $B\subset \XX\times \XX$  if 
\begin{equation}\label{Dor1}
	\sup_{x,y\in B} \|\Pp(x,\cdot)- \Pp(y,\cdot)\|_{TV} <2. 
\end{equation}

Condition \eqref{Dor1} is not always  easy to check, but there are sufficient conditions which are much more clear. Suppose that the probability kernel $\Pp(x,dy)$ is of the form 
$$
\Pp(x,dy) = \Pp_c(x,dy) + \Pp_d (x,dy), 
$$
where 
\begin{itemize}
	\item[i)]  $\Pp_c(x,dy)$, $\Pp_d (x,dy)$ are non-negative kernels; 
	\item[ii)] there exists a measure $m$ on $\XX$ such that 
	$$
	\Pp_c(x,dy)\ll m(dy); 
	$$
	\item[iii)]  for a given point $x^* \in \XX$ the mapping 
	$$
	x\mapsto \frac{\Pp_c(x,dy)}{m(dy)}\in L_1(\XX,m)
	$$
	is continuous at $x^*$ and 
	$$
	\Pp_c(x^*,\XX)>0. 
	$$
\end{itemize}
Assumptions i)--ii) are sufficient for the Dobrushin  condition \eqref{Dor1} to hold true, see \cite[Prop.2.9.1]{Ku17}.

The second   condition we need  is the \emph{Lyapunov type} condition. 
Suppose that there exist 

\begin{itemize}
	\item   a  norm-like function $V: \, \XX\to [1,\infty)$ (i.e. $V(x)\to \infty$ as $|x|\to  \infty$),  $V$ is bounded on a compact $K$, 
	
	\item  some function $f: \, [1,\infty) \to (0,\infty)$, which  admits a non-negative increasing and concave extension to $[0,\infty)$, 
	
	\item  a constant $C>0$,
\end{itemize}
  such that the following  relations  hold: 
\begin{equation}\label{Lyap1}
	\Ee_x V(X_1) - V(x) \leq - f (V(x))+ C, \quad x\in \XX, 
\end{equation}
\begin{equation}\label{Lyap11}
f\left(1+ \inf_{x\notin K} V(x)\right) > 2C.  
\end{equation}
Then (cf. \cite[Th.2.8.6]{Ku17}) the chain $X$ satisfies the so-called \emph{coupling condition}, which is the second counterpart for the required ergodicity. Such a function $V$ is called a \emph{Lyapunov function}.

Dobrushin and Lyapunov conditions  allow to prove that the chain $X$ is ergodic  (cf. \cite[Th.2.7.2]{Ku17}), and  find the convergence rates. Namely, the following result holds true (cf. \cite[2.8.8]{Ku17}): The invariant probability measure $\pi$ is unique, satisfies 
$$
\int_\real f(V(x))\mu(dx)<\infty, 
$$
and for any $\delta\in (0,1)$  there exist $c,\gamma>0$ such that 
\begin{equation}\label{erg}
	\| \Pp_n (x,\cdot) - \pi(\cdot)\|_{TV} \leq \psi^\delta (n) \left( f^\delta (V(x))+ \int_\real f(V(x))\mu(dx) \right). 
\end{equation}
Here 
\begin{equation}\label{F}
  \psi(n) :=  \frac{1}{f(F^{-1}(\gamma n))}, \quad 	F(t) := \int_1^t \frac{dw}{f(w)}. 
\end{equation}

Therefore, in order to follow  the strategy described above we need  to find the assumptions   on the continuous time Markov process $X_t$ related to \eqref{L1}, which  guarantee \eqref{Lyap1} and \eqref{Lyap11}.
% As the result, we get \eqref{erg}  with $n$ replaced by $t\geq 1$. 

Let us come back to the operator defined by \eqref{L1}.  The problem how to relate an  operator  \eqref{L1} and a (strong) Feller process was investigated a lot; see  \cite{Ja01}, \cite{Ja02},   \cite{G68}, \cite{Ko73}, \cite{Ko84a}, \cite{MP92a}, \cite{MP92b}, \cite{Ja92}, \cite{Ja94}, \cite{JL93}, \cite{Ne94},  \cite{KN97}, \cite{Ho98}, \cite{Ho00}, \cite{Ku19},  \cite{BKS20}, \cite{KK17}--\cite{KKS20}.  
In   \cite{KKS20} we give  an extensive overview of this problem. For example, 
in the series of papers \cite{KK17}--\cite{KKS20}, \cite{BKS20},  it is shown (for ``bounded coefficients'')  that  under certain regularity and boundedness assumptions  on the kernel $\nu(x,du)$ the operator admitting the representation \eqref{L1}  extends to the generator of a strong Markov process $X=(X_t)_{t\geq 0}$, which has a  strictly positive transition probability density, continuous in $x$. Hence, the assumptions i)--iii) are satisfied, and the respective transition probability kernel $P_t (x,dy)$ satisfies the Dobrishin condition.  The case of unbounded drift is more complicated. In the case of a L\'evy-driven SDE we refer to \cite[Prop.3.4.7]{Ku17} for the Dobrushin condition; see also the recent paper \cite{MZ22} (and the references therein) where the  transition probability density of the process is constructed and the Dobrushin condition follows.   Unfortunately, at the moment we do not know the reference to the result, which allows effectively check the  Dobrushin condition for a general L\'evy-type process.

At this point we would like to comment    on the other standard references for the ergodicity conditions, e.g.  on the  works \cite{MT93a}--\cite{MT93d}. 
Namely, in this framework it is often required that the process $X_t$ is the \emph{Lebesgue irreducible $T$-model}, see \cite[Ch.6]{MT93c}, \cite[Th.7.1]{T94},  also \cite[Cor.6.26]{BSW13}. This assumption is satisfied, if the process $X_t$ admits a strictly   positive  transition probability density and the respective semigroup possesses a $C_b$--Feller property. By the $C_b$-Feller property we mean that the respective semigroup $(T_t)_{t\geq 0}$ maps  the space $C_b(\real)$ of bounded continuous functions  into $C_b(\real)$.  Note that if  $X$ is a Feller process and $T_t \I_B\in C_b$, then $X$ has the $C_b$-Feller property, see \cite[Th.1.9]{BSW13}. This property follows, for example, from the results, proved in \cite{KK17}--\cite{KKS20}, \cite{BKS20}, but again we account the problem how to show the existence of a positive continuous density in the case when the drift coefficient is unbounded.  

\medskip 

\emph{Through the paper we assume  that $a(x)$ and $\nu(x,du)$ are such that the martingale problem for $(\LL, D_0)$ is well posed, and the respective Markov Feller process such that for any $t>0$  the transition kernel  $P_t(x,dy)$ satisfies the Dobrushin condition. } 

\medskip 

Let us discuss now the Lyapunov type condition for the initial process. Our aim is to construct the Lyapunov function such that 
\eqref{Lyap1} holds true. Since we start with a Markov process related to \eqref{L1}, the inequality  we check is not actually \eqref{Lyap1}, but 
\begin{equation}\label{Lyap2}
\LL  V(x) \leq - f(V(x))  + C, 
\end{equation}
where $V$ and $f$ have the same meaning as above,   
% (cf. \cite{MT93d}); in particular, the sets  $\{ x: V(x)\leq r\}$ are  precompact for each $r>0$. 
and $\LL$ is the \emph{full generator} of $(T_t)_{t\geq 0}$. 
Recall the ``stochastic version'' of a \emph{full generator}, see \cite[(1.50)]{BSW13}. %\footnote{\vica There is a  slight inconsistence in the terminology at this point. We use the name \emph{full generator} according to \cite{EK86}, which is consistent with the definition of the \emph{extended generator} from \cite[VII, Def.1.8]{RY99}. On the other hand, in some literature the extended generator is defined in a different way and is extpected to be unique; see the discussion in \cite[Ch.1.4]{BSW}. Since we further need to define the operator $L$ on  \emph{unbounded} functions, we use the extension of the Definition~1.35 from \cite{BSW}. \normal} 

\begin{definition} The full generator of a measurable contraction semigroup $(T_t)_{t\geq 0}$  on $C_\infty(\real)$ is 
	\begin{equation}\label{ah}
		\begin{split}
			\hat{L}:= &\Big\{ (f,g)\in(\Bb(\real),\Bb(\real)) :\quad   M_t^{[f,g]}:=f(X_t)-f(x)= \int_0^t g(X_s)ds,\quad x\in \real,\, t\geq0,\Big.\\
			& \Big.\text{is a local  $\Ff_t^X:= \sigma(X_s, s\leq t)$ martingale} \Big\}.
		\end{split}
	\end{equation}
\end{definition}
%Note that in general $\hat{L}$ is not single-valued, see the discussion in  \cite[p.36]{BSW}. 

Since  for $f\in D(L)$ 
\begin{equation}\label{gen1}
	T_t f(x) -f(x) = \int_0^t T_s Lf(x) ds \quad \forall t\geq 0, \,x\in \real,
\end{equation}
see \cite[Ch.1, Prop.1.5]{EK86}, then by the strong Markov property and the Dynkin formula $(f,Lf)\subset \hat{L}$ for any $f\in D(L)$.
Since for $f\in C^2(\real)$
$$
\left|\int_{|u|\leq 1} \left( f(x+u)- f(x) - f'(x) u\right) \nu(x,du) \right| \leq \max_{|u|\leq 1} |f''(x+u)| \int_{|u|\leq 1} u^2 \nu(x,du) <\infty,
$$
we have $Lf(x)<\infty$ for $f\in \mathcal{D}_0$,  where 
\begin{equation}\label{D2}
	\mathcal{D}_0:=\left\{ f\in C^2(\real): \quad\left| \int_{|u|\geq 1} (f(x+u)-f(x))\nu(x,du)\right|<\infty \quad \forall x\in \real\right\}. 
\end{equation}
 Using the localization procedure,  Ito formula  and taking the expectation,  we derive that in this $(f,Lf)\in \hat{L}$.  
We denote this extension of $L$ to $\mathcal{D}_0$ by $\LL$, and denote also by $\LL_0$ the extension of the integral part of $L$ to $\mathcal{D}_0$.

Thus, if we manage to prove \eqref{Lyap2}, then by \cite[Th.3.2.3]{Ku17} \eqref{Lyap1} holds true for the skeleton chain, and thus we have \eqref{erg}. Moreover, the same convergence rate will still hold for the initial Markov process:
\begin{equation}\label{erg2}
	\|P_t(x,\cdot)-\pi(\cdot)\|_{TV} \leq  C_{erg}(x) \psi^\delta(t), \quad t\to \infty,
\end{equation}
where  
\begin{equation}\label{F2}
\psi(t):= \frac{1}{ f(F^{-1}(\gamma t )))},
\end{equation}
 $f$ is the function for which the Lyapunov condition \eqref{Lyap2}  holds true,  $F$ is defined by \eqref{F}, and $C_{erg}(x)$ is the respective constant (depending on $x$). In this case we call the initial process $X$ \emph{$\psi$-ergodic. }

Let us briefly discuss some other existing results.  Ergodicity of Markov processes is studied a lot and under various  assumptions. We quote here only  several results which are in a similar context.     The results in some sense similar to ours were obtained in Wang   \cite{W08},  see also \cite{W13}, and in the papers of Sandric \cite{Sa13}, \cite{Sa16a}. 
See  Barky,  Cattiaux,  Guillin \cite{BCG08} for the approach  relying on  functional  inequalities, and 
Douc,  Fort, Guillin \cite{DFG09} for the condition of  positive Harris recurrence and  ergodicity, which in turn relies on   Fort, Roberts \cite{FR05}, 
and  Douc, Fort,  Moulines,  Soulier \cite{DFMS04} for the case  Markov chains, see also \cite{DFMS18}.   % Lyapunov condition

\section{Results}\label{Results}
 
Through the paper we assume that
\begin{equation}\tag{\bfseries S}\label{S}
		\nu(x,\cdot) \quad \text{ is  symmetric in the sense that $\nu(x,A)= \nu(x,-A)$,   $ A\in\mathcal{B}(\real)$, $\{0\}\notin A$}. 
\end{equation} 	
	Denote by $N(x,u)$ the tails of $\nu$: 
	\begin{equation}\label{n22}
		N(x,u) :=	\nu(x,(-\infty,u]), \quad  u<0. 
	\end{equation}
We consider three cases. First we assume that the tails of $\nu$ have polynomial decay and 
	\begin{equation}\tag{\bfseries N1}\label{N1}
		\lambda^{-\delta(z)}\leq  \liminf_{x\to \infty} \frac{N(z,\lambda x)}{N(z,x)}
		\leq \limsup_{x\to \infty} \frac{N(z,\lambda x)}{N(z,x)} \leq \lambda^{-\sigma(z)} \quad \forall \lambda\geq 1,
	\end{equation}
	for some $0<\sigma\leq \sigma(z)\leq \delta(z)\leq \delta <\infty$.
	Note  that this relation  is equivalent to 
		\begin{equation}\tag{\bfseries N1'}\label{N1'}
		\lambda^{-\sigma(z)}\leq  \liminf_{x\to \infty} \frac{N(z,\lambda x)}{N(z,x)}
		\leq \limsup_{x\to \infty} \frac{N(z,\lambda x)}{N(z,x)} \leq \lambda^{-\delta(z)} \quad    0<\lambda< 1. 
	\end{equation}
Let 
\begin{equation}\label{ff}
	N^* (x,\lambda) := \limsup_{u\to \infty} \frac{N(x,\lambda u)}{N(x,u)}, \quad
	N_* (x,\lambda) := \liminf_{u\to \infty} \frac{N(x,\lambda u)}{N(x,u)}.
\end{equation}
We  assume that 
\begin{equation}\tag{\bfseries N2}\label{N2}
	\text{The functions $N_*(x,\lambda)$ and $N^*(x,\lambda)$ are continuous in $x$ for all $\lambda\geq 1$ and $|x|\geq x_0$}. 
\end{equation}
By \eqref{N1} and \eqref{N2}  we have 
\begin{equation}\label{N11} 
\frac{1}{\lambda^\delta}\leq \inf_z   N_*(z,\lambda) \leq \sup_z  N^*(z,\lambda)  \leq \frac{1}{\lambda^\sigma}. 
\end{equation}
On the other hand, \eqref{N1} implies the following estimates on $N(x,x)$: there exists $N_\sigma, \,N_\delta>0$,  such that
\begin{equation}\label{N111}
	  \frac{ N_\delta }{x^{\delta}}\leq  N(x,x)\leq \frac{ N_\sigma}{x^{\sigma}}, \quad |x|\geq c. 
  \end{equation}
We need to set some additional notation. Let 
\begin{equation}\label{N}
	N(x): = N(x,x), \quad |x|\geq c, 
\end{equation}
	\begin{equation}\label{Nmax}
	N_{max}:= \max_{|x|\geq 1}N(x,1). 
\end{equation} 
\begin{equation}\label{C(x)}
		\nu_{small}(x) :=  \int_{|u|\leq 1} |u|^2 \nu(x,du), \quad |x|\geq 1,  
\end{equation}
	\begin{equation}\label{Cinf}
	\nu_{small} :=  \sup_{|x|\geq 1}\nu_{small}(x) < \infty.  
	\end{equation}
	Denote 
	\begin{equation}\label{C10}
		\begin{cases}
	C^{(1)}:= 2\I_{\delta=\sigma}\sum_{k=1}^\infty C_p^{2k} \left(\frac{N_{\delta} 2k}{2k-\delta} - \frac{N_\delta (2k-p)}{2k-p+\delta}\right)  + \frac{2p N_\sigma}{\sigma-p},\quad  &1<\sigma\leq \delta<2,\\
			C^{(2)}:=2p(p-1)N_{\delta}, \quad  &\sigma= \delta=2 \\
			C^{(3)}:= 
			\frac{p(p-1)}{2} \left(\nu_{small} +  2N_{max} +  \frac{4N_\delta}{\delta-2}\right)
			+ \frac{2p N_\sigma\I_{\sigma =2}}{\sigma -p} , \quad  &\sigma \geq 2, \, \delta \neq 2, \\
			C^{(4)}:=  \frac{2pN_\sigma}{\sigma-p},\quad  &1<\sigma <2, \, \delta\geq 2, 
		\end{cases} 
	\end{equation}
	where $N_\sigma$, $N_\delta$ and $N_{max}$ are defined in \eqref{N111} and  \eqref{Nmax}, respectivelty. Finally, define the following scaling functions: 
	
	\begin{align}\label{phi1}
		\phi^{(1)}(x)&:= |x|^{p-\sigma}, &  &1<\sigma\leq  \delta<2;\\ \label{phi2}
		\phi^{(2)}(x)&:=  |x|^{p-2}\ln(1+ |x|), &  & \sigma= \delta= 2; \\  \label{phi3}
		\phi^{(3)}(x)&:=   |x|^{p-2}, &  & \sigma \geq 2,\, \delta>2, \\
		\label{phi4}
		\phi^{(4)}(x)&:= \phi^{(1)}(x),&  &\sigma <2, \, \delta \geq 2. 
	\end{align} 
 In what follows $f$ and $\psi$ are related by \eqref{F}, and $f$ varies from theorem to theorem. \normal

	\begin{theorem}\label{t1}
		Suppose that  $p\in (1,\sigma)$,  and $f(x)\geq 1$, $x\in \real$, and one of the conditions below holds true with  some $f$ (which may vary from case to case):
		\begin{equation}\label{t1e1}
			\limsup_{|x|\to \infty} \left( \frac{p a(x)\sign(x)|x|^{p-1} + f(|x|^p\normal)}{\phi^{(i)}(x)} + C^{(i)}\right)<0, \quad 1\leq i\leq 4, 
		\end{equation}
		where $\phi^{(i)}(x)$ and $C^{(i)}$ are defined in \eqref{phi1}--\eqref{phi3} and \eqref{C10}, respectively. 
		Then the respective process $X$  is  $\psi$-ergodic. 
	\end{theorem}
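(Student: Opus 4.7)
The plan is to reduce the theorem to the Lyapunov inequality \eqref{Lyap2} by an explicit choice of Lyapunov function, and then invoke the machinery recalled in Section~\ref{Settings}. The natural candidate, given the scalings in \eqref{phi1}--\eqref{phi4} and the appearance of $|x|^p$ in \eqref{t1e1}, is
$$
V(x) = 1 + |x|^p, \quad p\in(1,\sigma),
$$
with a smooth $C^2$ modification near the origin so that $V\geq 1$ and $V\in C^2(\real)$. Since $p<\sigma$, the tail bound \eqref{N11} guarantees $\int_{|u|\geq 1}|x+u|^p\nu(x,du)<\infty$ for each $x$, so $V\in\mathcal{D}_0$ (see \eqref{D2}) and $\mathcal{L}V$ is well defined pointwise. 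The drift part trivially contributes $p\,a(x)\,\sign(x)|x|^{p-1}$ for $|x|$ large, which is exactly the first term appearing in \eqref{t1e1}.

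The main task is to estimate the integral part $\mathcal{L}_0 V(x)$. First I would split
$$
\mathcal{L}_0 V(x)=\Bigl(\int_{|u|\leq 1}+\int_{1<|u|\leq |x|/2}+\int_{|u|>|x|/2}\Bigr)\bigl(V(x+u)-V(x)-V'(x)u\,\I_{|u|\leq 1}\bigr)\nu(x,du).
$$
The small-jump part is controlled by $\tfrac12\sup_{|y-x|\leq 1}|V''(y)|\cdot \nu_{small}(x)$, which is bounded by a constant multiple of $|x|^{p-2}\nu_{small}$. For the moderate-jump part I would use a second-order Taylor expansion together with the symmetry assumption \eqref{S}, so that the odd-order term cancels and one is left with a remainder of order $|x|^{p-2}\int_{1<|u|\leq|x|/2}u^2\nu(x,du)$; this integral is evaluated by integration by parts against the tail $N(x,u)$ and then bounded using \eqref{N1} and \eqref{N111} according to whether $\sigma,\delta$ lie below $2$, at $2$, or above $2$ (producing the logarithmic factor exactly when $\sigma=\delta=2$). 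For the large-jump part one writes $|x+u|^p\leq 2^{p-1}(|x|^p+|u|^p)$ and uses symmetry again to cancel the linear term; the remaining term $\int_{|u|>|x|/2}(|x+u|^p+|x-u|^p-2|x|^p)\nu(x,du)$ is estimated via \eqref{N11} and a change of variables $v=u/|x|$, yielding a contribution of order $|x|^{p-\sigma}$. Adding the three contributions and matching the dominant scaling in each of the four regimes delivers
$$
\mathcal{L}_0 V(x)\leq C^{(i)}\phi^{(i)}(x)+o(\phi^{(i)}(x)),\qquad |x|\to\infty,
$$
with $C^{(i)}$ and $\phi^{(i)}$ as in \eqref{C10} and \eqref{phi1}--\eqref{phi4}.

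Combining this with the drift estimate and rearranging, condition \eqref{t1e1} yields, for all sufficiently large $|x|$,
$$
\mathcal{L}V(x) \leq -f(V(x))+\text{(bounded error)},
$$
because $V(x)=1+|x|^p$ and the quantity inside the $\limsup$ in \eqref{t1e1} is eventually strictly negative. On any compact set $\mathcal{L}V$ is bounded, so absorbing everything into a constant $C$ produces \eqref{Lyap2} with the given $f$. The norm-like requirement and boundedness on compacts for $V$ are automatic, and \eqref{Lyap11} is arranged by enlarging $C$ or $K$. Finally, applying \cite[Th.3.2.3]{Ku17} to transfer \eqref{Lyap2} into \eqref{Lyap1} for the skeleton chain $X^h$ and then invoking \eqref{erg}--\eqref{F} together with \eqref{erg2}--\eqref{F2} yields $\psi$-ergodicity of $X$.

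The principal obstacle I anticipate is the bookkeeping in the moderate- and large-jump estimates, in particular obtaining the exact constants $C^{(i)}$ in \eqref{C10}: the case $\sigma=\delta=2$ requires carefully tracking the logarithmic divergence of $\int_1^{|x|/2}u\cdot u\,dN(x,u)$, while the mixed case $\sigma<2\leq\delta$ forces one to use the \emph{lower} bound on $\sigma$ for the far tail and the \emph{upper} bound on $\delta$ for intermediate scales without losing the dominant $|x|^{p-\sigma}$ rate. These estimates rely crucially on the continuity and uniformity provided by \eqref{N1}--\eqref{N2} and \eqref{N11}.
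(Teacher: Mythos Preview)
Your overall strategy --- take $V\sim|x|^p$, estimate the integral part $\mathcal{L}_0V$ by splitting into small, moderate and large jumps, add the drift term, and then invoke the abstract machinery of Section~\ref{Settings} --- is exactly the route the paper follows, and your treatment of the drift and of the small jumps ($|u|\le1$) via a second-order Taylor bound matches the paper's Lemma~\ref{lem1}.

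The genuine gap is in the moderate- and large-jump estimates. You propose a second-order Taylor bound on $1<|u|\le|x|/2$ and the crude inequality $|x+u|^p\le 2^{p-1}(|x|^p+|u|^p)$ (with symmetrisation) on $|u|>|x|/2$. These bounds do yield the correct \emph{scalings} $\phi^{(i)}(x)$, but they cannot recover the specific constants $C^{(i)}$ in \eqref{C10}; and the theorem as stated is about exactly those constants. A larger constant $\tilde C^{(i)}>C^{(i)}$ would make \eqref{t1e1} a strictly stronger hypothesis, so you would only be proving a weaker result. Concretely, the infinite series $\sum_{k\ge1}C_p^{2k}\bigl(\tfrac{2kN_\delta}{2k-\delta}-\tfrac{(2k-p)N_\delta}{2k-p+\delta}\bigr)$ in $C^{(1)}$ for the case $\sigma=\delta$ cannot possibly emerge from a second-order Taylor expansion, and the factor $2^{p-1}$ together with the cutoff at $|x|/2$ would contaminate the large-jump constant as well.

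What the paper does instead (Lemma~\ref{lem1}) is to split the jump integral at $|u|=|x|$ and, in each region, use the \emph{full} generalized binomial series \eqref{bin2}: for $1<|u|<|x|$ one writes $(1\pm u/x)^p=\sum_{k\ge0}C_p^k(\pm u/x)^k$, while for $|u|>|x|$ one writes $(u/x\mp1)^p=\sum_{k\ge0}C_p^k(u/x)^{p-k}(\mp1)^k$. Symmetry of $\nu(x,\cdot)$ kills all odd-$k$ terms exactly (not just the first-order one), and after an integration by parts against $N(x,u)$ and the change of variable $u\mapsto xu$, each surviving even-$k$ term becomes an explicit multiple of $x^pN(x,x)$ times an integral of $N(x,xu)/N(x,x)$ in $u$, which is then controlled sharply via \eqref{N1}, \eqref{N1'} and \eqref{N111}. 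Summing the series is what produces the constants $C^{(i)}$. Your approach, as sketched, will reproduce Corollary~\ref{c2} (where only the scaling matters) but not Theorem~\ref{t1} itself.
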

	In order to write the ergodicity condition in a simpler form, we impose one more assumption. Assume  the following condition on the drift $a(x)$ (see also \cite{Ku17}, cf.~(3.3.4)): 
	\begin{equation}\label{a-gr}
		\limsup_{|x|\to \infty} \frac{a(x)\sign(x)}{|x|^\kappa}\leq - A_\kappa\in (-\infty,0).
	\end{equation}
	
	\begin{corollary}\label{c2}
		Suppose that the assumptions of Theorem~1 hold true and  
		\begin{equation}\label{bal1} 
			\kappa+\min(\sigma,2) >1. 
		\end{equation}
		Then the process $X$  is $\psi$-ergodic, where $\psi$ is related  to  $f(x) = C x^{1+(\kappa-1)/p}$, $C>0$,   for $\kappa \in [-1, 1)$, and $f(x) =   C t$ for $\kappa \geq 1$,  by \eqref{F2}.  
	\end{corollary}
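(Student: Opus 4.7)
The plan is to apply Theorem~\ref{t1} with $V(x)=|x|^p$ and with the $f$ prescribed by the corollary, verifying the sign condition \eqref{t1e1} in whichever of the four regimes $i\in\{1,2,3,4\}$ the pair $(\sigma,\delta)$ falls into. The balance condition \eqref{bal1} is exactly what is needed to make the limsup negative in each regime.

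First, from \eqref{a-gr} I would extract the estimate: for any $A_\kappa'\in(0,A_\kappa)$ and all sufficiently large $|x|$,
$$
p\,a(x)\sign(x)\,|x|^{p-1}\;\leq\;-pA_\kappa'\,|x|^{p+\kappa-1}.
$$
For $\kappa\in[-1,1)$ I take $f(t)=Ct^{1+(\kappa-1)/p}$, so that $f(|x|^p)=C|x|^{p+\kappa-1}$ exactly matches the drift term in order; the numerator in \eqref{t1e1} is then bounded above by $(C-pA_\kappa')|x|^{p+\kappa-1}$ for large $|x|$. For $\kappa\geq 1$ I take $f(t)=Ct$, so $f(|x|^p)=C|x|^p$; the drift strictly dominates when $\kappa>1$ and matches it when $\kappa=1$. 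Dividing by $\phi^{(i)}$ from \eqref{phi1}--\eqref{phi4}: in cases $i=1,4$ (where $\sigma<2$) one has $\phi^{(i)}(x)=|x|^{p-\sigma}$ and the ratio is of order $|x|^{\sigma+\kappa-1}$, with $\sigma+\kappa-1>0$ by \eqref{bal1}; in case $i=2$ ($\sigma=\delta=2$) the ratio is of order $|x|^{\kappa+1}/\ln(1+|x|)$, with $\kappa+1>0$; in case $i=3$ ($\sigma\geq 2$, $\delta>2$) it is of order $|x|^{\kappa+1}$. In all situations the ratio tends to $-\infty$ provided $C<pA_\kappa'$ (any $C>0$ when $\kappa>1$), and the additive constant $C^{(i)}$ is then harmless.

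The only subtlety, and the main obstacle, is the structural admissibility of $f$: it must be non-negative, increasing, have a concave extension to $[0,\infty)$, and satisfy $f\geq 1$. Concavity of $t\mapsto t^{1+(\kappa-1)/p}$ requires $1+(\kappa-1)/p\leq 1$, i.e.\ $\kappa\leq 1$, which holds on $[-1,1)$; monotonicity and positivity require $1+(\kappa-1)/p>0$, i.e.\ $p>1-\kappa$. By \eqref{bal1} and the standing restriction $p\in(1,\sigma)$ of Theorem~\ref{t1}, the interval $(\max(1,1-\kappa),\sigma)$ is non-empty, so $p$ can be chosen there; this is precisely where \eqref{bal1} enters the construction of $f$, as distinct from its role in verifying \eqref{t1e1}. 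The normalization $f\geq 1$ is then secured by replacing $f$ with $\max(f,1)$, which perturbs \eqref{t1e1} only on a bounded region and leaves the asymptotic rate $\psi$ in \eqref{F2} unchanged. Once both the admissibility of $f$ and \eqref{t1e1} are verified, Theorem~\ref{t1} yields $\psi$-ergodicity with the claimed $f$.
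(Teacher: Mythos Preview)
Your proof is correct and follows essentially the same route as the paper's: plug the drift bound from \eqref{a-gr} and the chosen $f$ into the numerator of \eqref{t1e1}, divide by the relevant $\phi^{(i)}$, and use \eqref{bal1} to force the quotient to $-\infty$ in each of the four regimes (with $C<pA_\kappa'$, or any $C>0$ when $\kappa>1$). Your additional paragraph on the admissibility of $f$ --- concavity forcing $\kappa\le 1$, positivity of the exponent forcing $p>1-\kappa$, and the observation that \eqref{bal1} guarantees the interval $(\max(1,1-\kappa),\sigma)$ is non-empty --- is a point the paper does not spell out, so your argument is in fact slightly more complete.
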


\begin{remark}
	\begin{itemize}
		\item[i)]
	Observe that  for $t\geq 1 $ we have 
	\begin{equation}\label{F10}
	F(t) = 	\begin{cases}
		\frac{\ln{t}}{C}, & \kappa \geq  1, \\
			\frac{p}{C(1-\kappa)}\left( t^{\frac{1-\kappa}{p}}-1\right), & \kappa\in [-1,1).
		\end{cases}
	\end{equation}
Consequently, for any $\gamma>0$ 
 \begin{equation}
	\psi(t) = 
	\begin{cases}
		C^{-1}e^{-C\gamma t}, & \kappa \geq  1, \\
		C^{-1}\left(\frac{C\gamma (1-\kappa)}{p}t+1 \right)^{1-\frac{p}{1-\kappa}}, & \kappa\in [-1,1).
	\end{cases}
\end{equation}
Thus, the rate of convergence in \eqref{erg2} is exponential, if $\kappa\geq 1$, and is of order $t^{\frac{p}{1-\kappa}-1}$,  if $\kappa\in [-1,1)$, provided that \eqref{bal1} holds true.

\item[ii)] Note that for $\kappa >1$ the function $\frac{p}{C(1-\kappa)}\left( t^{\frac{1-\kappa}{p}}-1\right)$  is bounded. 
	\end{itemize}   
\end{remark}
	
	Consider now the light-tail case. We assume that there exist $\alpha>0$, $\zeta\in (-1,0]$ such that 
	\begin{equation}\label{ExAl} 
		\nu_{\alpha,\zeta, large} (x)  :=  	\int_{|u|\geq 1} e^{\alpha|u|^{1+\zeta}}\nu(x,du)<\infty \quad \text{for any $x\in \real$}. 
	\end{equation}
	Suppose that
	\begin{equation}\label{numax}
		\nu_{\alpha,\zeta, large} := \sup_{|x|\geq 1} 	\nu_{\alpha,\zeta, large} (x)<\infty. 
	\end{equation}
	%Chose 
	%$V(x) = e^{\alpha \phi(x)}$, where $\phi(x)$ is defined in \eqref{phi}, 
	Let 
	\begin{equation}\label{phi5c5}
	\phi^{(5)}(x) = |x|^{\kappa+\zeta} e^{\beta|x|}, \qquad C^{(5)}:= 2^{-1}\beta^2 (1+\zeta)^2 \left(  e^\beta \nu_{small}+ c_0  \nu_{\alpha,\zeta,large}\right), 
	\end{equation}
where $c_0$ is such that $x^2e^{\beta x^{1+\zeta}}\leq c_0 e^{\alpha x^{1+\zeta}}$, $x>1$. 
%\begin{equation}\label{phi6c6}
%\phi^{(6)}(x) = |x|^{\kappa+\zeta}  %e^{\alpha|x|^{1+\zeta}}, \qquad C^{(6)}:= ??? %\quad \quad (\quad was: 3 \nu_{\alpha,\zeta, %large}).\normal
%\end{equation}

\begin{theorem}\label{t2} 
	Suppose that $ \beta>0$, $\zeta\in (-1,0]$ are  such that \eqref{numax} is satisfied.  Suppose that $f$ satisfies the inequality
\begin{equation}\label{erg-exp02}
	\limsup_{|x|\to \infty} \left(\frac{ \beta  (1+\zeta) a (x)\sign(x)}{|x|^\kappa} + \frac{f(e^{\beta|x|^{1+\zeta}})}{\phi^{(5)}(x)}+ C^{(5)}\right)<0.
\end{equation} 
Then the process $X$  is $\psi$-ergodic. 
\end{theorem}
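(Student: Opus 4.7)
The plan is to follow the Foster--Lyapunov scheme recalled in Section~\ref{Settings}: it suffices to exhibit a norm-like function $V \in \mathcal{D}_0$ with $V \geq 1$ for which the Lyapunov inequality \eqref{Lyap2} holds, since by \cite[Th.3.2.3]{Ku17} and the derivation of \eqref{erg2} this transfers to $\psi$-ergodicity of $X$. The natural choice is $V(x) := e^{\beta |x|^{1+\zeta}}$ for $|x| \geq 1$, smoothly extended to a $C^2$ function on $\real$ with $V \geq 1$; this is norm-like because $1+\zeta > 0$, and subadditivity of $t \mapsto t^{1+\zeta}$ on $[0,\infty)$ (which uses $1+\zeta \leq 1$) gives the key pointwise bound
\[
V(x+u) \leq V(x) \, e^{\beta |u|^{1+\zeta}}, \qquad x, u \in \real,
\]
from which \eqref{numax} immediately yields $V \in \mathcal{D}_0$.

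For $|x|$ large I would use $V'(x) = \beta(1+\zeta) \sign(x) |x|^\zeta V(x)$ and the dominant second-derivative bound $V''(x) \leq \beta^2 (1+\zeta)^2 |x|^{2\zeta} V(x)$, valid because the subleading $|x|^{\zeta-1}$ term has a negative sign for $\zeta \leq 0$. The drift contribution then reads $a(x) V'(x) = \phi^{(5)}(x) \cdot \beta(1+\zeta) a(x) \sign(x)/|x|^\kappa$, reproducing the first summand of \eqref{erg-exp02}. For the integral part, the symmetry \eqref{S} lets me write
\[
\mathcal{L}_0 V(x) = \tfrac{1}{2} \int_\real \bigl(V(x+u) + V(x-u) - 2V(x)\bigr) \, \nu(x, du)
\]
and apply the second-order Taylor bound $V(x+u) + V(x-u) - 2V(x) \leq u^2 \sup_{|s|\leq |u|} V''(x+s)$. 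On $\{|u| \leq 1\}$ the subadditivity bound gives $V(x+s) \leq e^\beta V(x)$, producing a small-jump contribution at most $\tfrac{1}{2} \beta^2(1+\zeta)^2 e^\beta \nu_{small} \cdot |x|^{2\zeta} V(x)(1+o(1))$. On $\{|u| > 1\}$, combining $V''(x+s) \lesssim |x+s|^{2\zeta} V(x) e^{\beta |s|^{1+\zeta}}$ with the defining inequality $u^2 e^{\beta u^{1+\zeta}} \leq c_0 e^{\alpha u^{1+\zeta}}$ (for $u > 1$) and \eqref{numax} yields a large-jump contribution at most $\tfrac{1}{2} \beta^2(1+\zeta)^2 c_0 \nu_{\alpha,\zeta,large} \cdot |x|^{2\zeta} V(x)(1+o(1))$.

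Summing the three contributions and dividing by $\phi^{(5)}(x) = |x|^{\kappa+\zeta} V(x)$ gives
\[
\frac{\mathcal{L} V(x)}{\phi^{(5)}(x)} \leq \frac{\beta(1+\zeta) a(x) \sign(x)}{|x|^\kappa} + C^{(5)} + o(1) \quad \text{as } |x| \to \infty,
\]
so hypothesis \eqref{erg-exp02} makes $\mathcal{L} V(x) + f(V(x))$ eventually non-positive; a bounded additive modification on a large ball promotes this to \eqref{Lyap2} on all of $\real$, and the framework of Section~\ref{Settings} finishes the argument. The main obstacle is the large-jump estimate: the bound $V''(y) \lesssim |y|^{2\zeta} V(y)$ degenerates at $y = 0$ when $\zeta < 0$, so the sub-regime where $|u|$ exceeds, say, $|x|/2$ must be treated separately using only the naive estimate $V(x+u) \leq V(x) e^{\beta |u|^{1+\zeta}}$ (whose contribution after dividing by $\phi^{(5)}$ is swallowed by the decaying prefactor $|x|^{-(\kappa+\zeta)}$), while the complementary sub-regime $1 \leq |u| \leq |x|/2$ is what produces the $c_0 \nu_{\alpha,\zeta,large}$ term in $C^{(5)}$. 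Aligning these two regimes so that the total coefficient in front of $|x|^{2\zeta} V(x)$ collapses exactly to $\tfrac{1}{2} \beta^2(1+\zeta)^2 (e^\beta \nu_{small} + c_0 \nu_{\alpha,\zeta,large})$ is the technical heart of the proof.
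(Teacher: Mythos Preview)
Your plan mirrors the paper's proof almost step for step: same Lyapunov function $V(x)=e^{\beta\phi(x)^{1+\zeta}}$, same use of symmetry and of the second-order Taylor bound, and the same small/intermediate/far splitting of the jump integral that produces the two halves of $C^{(5)}$.  Two small repairs are needed, though, and both concern the intermediate/far split.

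First, cutting the large jumps at a \emph{fixed} proportion such as $|x|/2$ leaves an unwanted factor $(1/2)^{2\zeta}\geq 1$ (recall $\zeta\le 0$) in front of the $c_0\,\nu_{\alpha,\zeta,large}$ term, so you do not land on the constant $C^{(5)}$ of \eqref{phi5c5} but on something strictly larger when $\zeta<0$; the hypothesis \eqref{erg-exp02} would then no longer suffice.  The paper avoids this by splitting at $\epsilon|x|$, obtaining $(1-\epsilon)^{2\zeta}$, and letting $\epsilon\downarrow 0$ \emph{after} the $\limsup$ has been taken---which is legitimate because the far-tail piece is $o(\phi^{(5)})$ for every fixed $\epsilon>0$.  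Second, that far-tail piece is \emph{not} killed by the polynomial prefactor $|x|^{-(\kappa+\zeta)}$ as you write: $\kappa+\zeta$ may well be negative (cf.\ Corollary~\ref{c4}(ii), where $\zeta\le\kappa<0$), so that prefactor actually grows.  What really makes the term negligible is the exponential gain: writing $e^{\beta|u|^{1+\zeta}}=e^{-(\alpha-\beta)|u|^{1+\zeta}}e^{\alpha|u|^{1+\zeta}}$ and invoking \eqref{numax} yields the extra factor $e^{-(\alpha-\beta)(\epsilon|x|)^{1+\zeta}}$, which beats any power of $|x|$.  With these two adjustments your argument coincides with the paper's.
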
 
In order to write this result in a more convenient  form assume \eqref{a-gr}.  
\begin{corollary}\label{c4}
	\begin{itemize}
	\item[i)] Suppose that   $\zeta=0$ and  \eqref{a-gr} holds with $\kappa\geq 0$.
	Then for $\kappa>0$ the process $X$ is $\psi$-ergodic with    $f(x) = C x$, where $C>0$ is arbitrary.
	 
\noindent And for $\kappa =0$ the process $X$  is  $\psi$-ergodic with the same $f$ as above, provided that 
\begin{equation}\label{erg-exp1}
	- \beta A_0 + C + C^{(5)}<0.  
\end{equation}

\item[ii)]  Suppose that $\zeta\in  (-1,\kappa]$  and  \eqref{a-gr} holds   with $\kappa \in (-1,0) $ and  $ f(x) =Cx\left(\frac{1}{\beta} \ln{x}\right)^{\frac{\kappa+\zeta}{1+\zeta}}$.   Then the process $X$  is $\psi$-ergodic,  provided that 
\begin{equation}\label{erg-exp2}
	- \beta (1+ \zeta) A_\kappa + C + C^{(5)}<0.  
\end{equation} 
\end{itemize}
\end{corollary}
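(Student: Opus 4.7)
The plan is to derive this corollary directly from Theorem~\ref{t2} by substituting the prescribed Lyapunov rate $f$ into the inequality \eqref{erg-exp02} and then invoking the drift hypothesis \eqref{a-gr}. All the work lies in evaluating the ratio $f\bigl(e^{\beta|x|^{1+\zeta}}\bigr)/\phi^{(5)}(x)$ explicitly; once this is done, \eqref{erg-exp02} splits into the two stated conditions.

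For part (ii) I would start from $f(x)=Cx\bigl(\beta^{-1}\ln x\bigr)^{(\kappa+\zeta)/(1+\zeta)}$ and compute, using the identity $\bigl(\beta^{-1}\ln e^{\beta|x|^{1+\zeta}}\bigr)^{(\kappa+\zeta)/(1+\zeta)} = (|x|^{1+\zeta})^{(\kappa+\zeta)/(1+\zeta)} = |x|^{\kappa+\zeta}$,
\begin{equation*}
f\bigl(e^{\beta|x|^{1+\zeta}}\bigr) \;=\; C\,e^{\beta|x|^{1+\zeta}}\,|x|^{\kappa+\zeta}.
\end{equation*}
This expression matches $\phi^{(5)}(x)$ up to the multiplicative constant $C$, so the ratio $f(e^{\beta|x|^{1+\zeta}})/\phi^{(5)}(x)$ simplifies to $C$. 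Combining with the bound $\limsup_{|x|\to\infty}\beta(1+\zeta)a(x)\sign(x)/|x|^\kappa \leq -\beta(1+\zeta)A_\kappa$ from \eqref{a-gr}, the inequality \eqref{erg-exp02} reduces precisely to \eqref{erg-exp2}.

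For part (i) I specialise to $\zeta=0$, in which case $e^{\beta|x|^{1+\zeta}}=e^{\beta|x|}$. The linear choice $f(x)=Cx$ yields $f(e^{\beta|x|})/\phi^{(5)}(x) = Ce^{\beta|x|}/(|x|^\kappa e^{\beta|x|}) = C|x|^{-\kappa}$. For $\kappa>0$ this ratio vanishes as $|x|\to\infty$, so the constant $C$ drops out of the asymptotic balance in \eqref{erg-exp02} entirely (which is why it may be taken ``arbitrary''), and the condition reduces to $-\beta A_\kappa + C^{(5)}<0$, controlled by the drift strength in \eqref{a-gr}. For $\kappa=0$ the ratio stays equal to $C$ and \eqref{erg-exp02} collapses to exactly \eqref{erg-exp1}.

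There is no genuine analytic obstacle here, since the hard work has already been done in Theorem~\ref{t2}; the derivations are purely algebraic bookkeeping. The only mildly delicate point is the choice of the logarithmic exponent $(\kappa+\zeta)/(1+\zeta)$ in part (ii), dictated exactly by the requirement that both the exponential factor $e^{\beta|x|^{1+\zeta}}$ and the polynomial factor $|x|^{\kappa+\zeta}$ in $\phi^{(5)}(x)$ be cancelled simultaneously after composing $f$ with $e^{\beta|x|^{1+\zeta}}$. Once this pairing is identified, the corollary is immediate.
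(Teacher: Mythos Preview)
Your proposal is correct and follows essentially the same route as the paper, which simply states that Corollary~\ref{c4} ``follows in the same way as that of Corollary~\ref{c2}'': substitute the given $f$ into condition~\eqref{erg-exp02} of Theorem~\ref{t2}, use the drift hypothesis~\eqref{a-gr}, and read off the resulting algebraic condition. Your write-up is in fact more detailed than the paper's own argument; the only point worth tightening is the $\kappa>0$ case in part~(i), where the residual condition $-\beta A_\kappa + C^{(5)}<0$ is met not by the drift strength per se but by choosing $\beta$ small enough (since $C^{(5)}=O(\beta^2)$ as $\beta\to 0$).
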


  It is possible to build the corresponding function $\psi$ from \eqref{erg2}. For $f(x) = Cx$ we have
$$
F(t) = C^{-1}\ln{t}, \qquad \psi(t) = C^{-1}e^{-C\gamma t}, 
$$
and  for $f(x)= C\left(\beta^{-1}\ln{x}\right)^{\frac{\kappa+\zeta}{1+\zeta}} x$ we have 
$$
F(t) = \frac{1+\zeta}{C(1-\kappa)} \beta^{\frac{\kappa+\zeta}{1+\zeta}}\left(\ln{t}\right)^{\frac{1-\kappa}{1+\zeta}},
$$
and 
$$
\psi(t) = C^{\frac{1+\zeta}{\kappa-1}} \beta^{\frac{\kappa+\zeta}{1-\kappa}} \left(\frac{\gamma (1-\kappa)}{1+\zeta}t\right)^{\frac{\kappa+\zeta}{\kappa-1}} e^{-\left(C \beta^{-\frac{\kappa+\zeta}{1+\zeta}}\frac{\gamma(1-\kappa)}{1+\zeta}t\right)^{\frac{1+\zeta}{1-\kappa}}}.
$$
\begin{remark}\label{rem-zeta}
It is possible to choose $\zeta$ optimally. Note that $\kappa<0$ and $1+\zeta>0$, thus the power in the exponent is positive. It will be maximal, if we choose $\zeta$ maximal possible, e.g. $\zeta=\kappa$. Note also, that  $\frac{\kappa+\zeta}{\kappa-1}>0$, thus the decay is determined by the exponent. 
\end{remark}

\section{Proofs}\label{Proofs}

We begin with the proof of Theorem~\ref{t1}. 

Recall the following series decomposition. 	Let $x,y$ be real, $p\in \real$. If $|x|>|y|$, then 
\begin{equation}\label{bin2}
	(x+y)^{p}= \sum_{k=0}^\infty C_p^k x^{p-k}y^k. 
\end{equation}

Let $\phi: \real\to \real_+$, $\phi\in C^2(\real)$, be defined as follows:
\begin{equation}\label{phi}
	\phi(x) =|x|\quad \text{ for $|x|>1$\,\, and} \qquad \phi(x)\leq |x|\qquad\text{ for $|x|\leq 1$.}
\end{equation}

Define 
\begin{equation}\label{Vp}
	V (x)= \phi^p(|x|), \quad p>0, \quad x\in \real;
\end{equation}
note that in both cases $V\in \mathcal{D}_0$. 

The proof of Theorem~\ref{t1}  relies from the following  lemma. 

	\begin{lemma}\label{lem1}
		Let  $p\in (1,\sigma)$ in the definition \eqref{Vp} of $V$. Then 
		\begin{equation}\label{limV}
			\limsup_{x\to \infty}  \frac{\LL_0 V (x) }{\phi^{(i)}(x)}\leq C^{(i)}, \quad 1\leq i\leq 4, 
		\end{equation}
		where the functions $\phi^{(i)}$ are defined for the respective $\sigma$ and $\delta$ in \eqref{phi1}--\eqref{phi4}.  
		\end{lemma}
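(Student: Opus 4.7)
The plan is to exploit symmetry \eqref{S} to recast the integral part of the generator as a symmetric second difference: because $V'(x)u\I_{|u|\le 1}$ integrates to zero against $\nu(x,\cdot)$, one has
\[
\LL_0 V(x) = \tfrac{1}{2}\int_\real \bigl(V(x+u)+V(x-u)-2V(x)\bigr)\,\nu(x,du).
\]
I would take $x>2$ (so that $V(y)=y^p$ on the relevant range) and split the integral into three natural pieces: the small-jump region $|u|\le 1$, the intermediate region $1<|u|\le x/2$, and the large-jump region $|u|>x/2$. Each piece will be attacked with a tool matched to where its integrand lives.

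On $|u|\le 1$ both $x+u$ and $x-u$ exceed $1$, so Taylor's theorem yields
\[
V(x+u)+V(x-u)-2V(x)=p(p-1)x^{p-2}u^2+O\bigl(|u|^3 x^{p-3}\bigr),
\]
whose contribution is bounded by $\tfrac{p(p-1)}{2}x^{p-2}\nu_{small}+o(x^{p-2})$ via \eqref{Cinf}. On $1<|u|\le x/2$ we still have $x\pm u>0$, so the binomial identity \eqref{bin2} applies and odd-order terms cancel by symmetry, giving
\[
V(x+u)+V(x-u)-2V(x)=2\sum_{k\ge 1} C_p^{2k}\,x^{p-2k}\,u^{2k}.
\]
Integrating termwise against $\nu(x,du)$, each $\int_1^{x/2} u^{2k}\,|dN(x,u)|$ is handled by parts using the two-sided power bounds \eqref{N11}--\eqref{N111}, and summability over $k$ follows from $|C_p^{2k}|\lesssim k^{-1-p}$. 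On $|u|>x/2$ the crude bound $|V(x+u)|\le 3^p|u|^p$, together with $p<\sigma$ and the upper tail bound on $N$, gives after integration by parts a contribution of order at most $\tfrac{2pN_\sigma}{\sigma-p}\,x^{p-\sigma}$.

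The four cases in \eqref{C10}--\eqref{phi4} then arise from which region dominates. In case (i), $1<\sigma\le\delta<2$, the intermediate and large-jump pieces are both of order $x^{p-\sigma}$; when $\sigma=\delta$ the binomial series contributes exactly the bracketed $\sum_{k\ge 1}C_p^{2k}(\cdots)$ of $C^{(1)}$, and when $\sigma<\delta$ only the large-jump term survives after division by $\phi^{(1)}$. In case (ii), $\sigma=\delta=2$, the $k=1$ intermediate integral produces a logarithm, $p(p-1)N_\delta\ln(x/2)\,x^{p-2}$, giving $\phi^{(2)}=|x|^{p-2}\ln(1+|x|)$ and $C^{(2)}=2p(p-1)N_\delta$. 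In case (iii), $\sigma\ge 2$ and $\delta>2$, the full second moment of $\nu(x,\cdot)$ converges uniformly in $x$, and the dominant contribution is $\tfrac{p(p-1)}{2}x^{p-2}$ times a finite constant combining $\nu_{small}$, $N_{max}$ and $\tfrac{4N_\delta}{\delta-2}$, producing $\phi^{(3)}$ and $C^{(3)}$. In case (iv), $1<\sigma<2$ and $\delta\ge 2$, the heavier upper tail forces the large-jump piece to dominate, yielding $\phi^{(4)}=\phi^{(1)}$ and $C^{(4)}=\tfrac{2pN_\sigma}{\sigma-p}$.

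I expect the main obstacle to be the bookkeeping in case (i): one must show that the binomial series over the intermediate region is finite uniformly in $x$ and produces precisely the prefactor $2\sum_{k\ge 1} C_p^{2k}\bigl(\tfrac{2kN_\delta}{2k-\delta}-\tfrac{(2k-p)N_\delta}{2k-p+\delta}\bigr)$. This requires two distinct integration-by-parts steps (one producing the $\tfrac{2k}{2k-\delta}$ factor from the interior and one producing the $\tfrac{2k-p}{2k-p+\delta}$ cancellation at the boundary $|u|=x/2$), together with the bounds on $N$ supplied by \eqref{N11}--\eqref{N111}. The other three cases follow the same template with simpler summations or tighter tail bounds.
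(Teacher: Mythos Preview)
Your skeleton is close to the paper's: symmetrize via \eqref{S}, split into small/intermediate/large jumps, use Taylor on the small piece and the binomial series on the intermediate piece. Where you diverge from the paper---and where the gap lies---is in the treatment of the large-jump region.

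The paper splits at $|u|=x$ (not $x/2$) and on the region $|u|>x$ it applies the binomial series \eqref{bin2} a \emph{second} time, now expanding $|x+u|^p$ in powers of $x/u$ rather than $u/x$. After symmetrizing, this yields
\[
I_{11}(x)+I_{12}(x)=2x^p\sum_{k\ge 0}C_p^{2k}\,x^{2k-p}\int_x^\infty u^{p-2k}\,\nu(x,du),
\]
and integration by parts against the tail $N(x,\cdot)$ produces, for each $k\ge 1$, a term $(p-2k)\int_1^\infty u^{-(2k+1-p)}N(x,xu)\,du$. It is \emph{this} integral---not a boundary term at the intermediate/large interface---that gives the contribution $-\tfrac{(2k-p)N_\delta}{2k-p+\delta}$ in $C^{(1)}$ when $\sigma=\delta$. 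The $k=0$ term of the same expansion is what gives exactly $\tfrac{2pN_\sigma}{\sigma-p}$. Your crude bound $|V(x+u)|\le 3^p|u|^p$ on $|u|>x/2$ cannot recover either of these: it produces the correct order $x^{p-\sigma}$ but with a constant inflated by factors like $3^p$ and $2^{\sigma-p}$, and it loses the series structure entirely. Your suggestion that the $\tfrac{2k-p}{2k-p+\delta}$ factor emerges from a boundary term at $|u|=x/2$ is therefore mistaken; working it out, that boundary term is $-N_\delta\,2^{\delta-2k}$, not $-\tfrac{(2k-p)N_\delta}{2k-p+\delta}$.

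Consequently your argument would establish $\limsup \LL_0 V/\phi^{(i)}\le \tilde C^{(i)}$ for some finite $\tilde C^{(i)}$---enough for the downstream ergodicity theorem with a modified condition \eqref{t1e1}---but not the lemma as stated with the specific constants \eqref{C10}. To match the paper you need to (i) move the split to $|u|=x$ so that the two binomial expansions meet cleanly, and (ii) replace the crude large-jump bound by the second binomial expansion in $x/u$. Cases (2)--(4) are less sensitive to this because the offending series terms vanish after division by $\phi^{(i)}$, but case (1) with $\sigma=\delta$ and case (3) with $\sigma=2$ genuinely require the sharper large-jump analysis.
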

\begin{proof}
  \emph{ Case (1).}  Note that in this part we assumed that $\delta<2$.
	\begin{align*}
		\LL_0  V (x)= \left(\int_{|u|>1}  + \int_{|u|\leq 1} \right) \left(V (x+u)- V(x) - V'(x)\I_{|u|\leq 1} \right) \nu(x,du)= I_1(x)+I_2(x). 
	\end{align*}
For $I_2(x)$ we have   for $x$  large enough 
\begin{equation}\label{I2}
	\begin{split}
I_2(x) &=   \frac{p(p-1)}{2} \int_{|u|\leq 1}  (u(1-\zeta))^2 \left[\int_0^1 (x+ \zeta u )^{p-2} d\zeta\right] \nu(x,du)\leq \frac{C(x)}{x^{2-p}}. 
\end{split}
\end{equation}
Since $\delta<2$,  by \eqref{N11} we get 
\begin{equation}\label{del1}
	\lim_{x\to \infty} \frac{I_2(x)}{\phi^{(1)}(x) }=0. 
\end{equation}
Let us estimate $I_1(x)$. Let 
$$
\tilde{I}_1 (x) := \int_{|u|>1} \left(|x+u|^p - |x|^p\right)
\nu(x,du), 
$$
and split  
\begin{align*}
	I_1(x) \pm \tilde{I}_1(x) &= \left(\int_{-\infty}^{-x} +  \int_{x}^\infty + \int_{-x}^{-1}+ \int_1^x \right)  |x+u|^p\nu(x,du)  -|x|^p \int_{|u|>1} \nu(x,du)\\
	& \qquad + \int_{|u|>1, |x+u|<1} ((\phi(|x+u|))^p- |x+u|^p)\nu(x,du)\\
	&= I_{11}(x)+ I_{12}(x)+ I_{13}(x) + I_{14}(x)+ I_{15}+I_{16}. 
\end{align*}
In order to estimate $I_{1k}(x)$, $1\leq k\leq 5$, we use  \eqref{bin2}. 

For $I_{16}(x)$ we have 
\begin{align*}
	|I_{16}(x)|&\leq 2 \int_{|u|>1, |x+u|<1} \nu(x,du)\leq 
	2  \nu(x,B(x,1)), 
\end{align*}
implying 
$$
\limsup_{x\to \infty} \frac{|I_{16}(x)|}{ x^{p-\sigma} }\leq \limsup_{x\to \infty} \frac{2 N(x,x-1)}{x^{p-\sigma}})=0. 
$$

For $I_{11}(x)$ we have  due to the symmetry of $\nu(x,du)$ in $u$
\begin{align*}
	I_{11}(x)& = x^p \int_x^\infty \left(-1+\frac{u}{x}\right)^p\nu(x,du)  =  x^p \int_x^\infty \sum_{k=0}^\infty C_p^k \left(\frac{u}{x}\right)^{p-k} (-1)^k  \nu(x,du). 
\end{align*}
where in the second equality  we used \eqref{bin2}.  Similarly, 
$$
I_{12}(x)= x^p \int_x^\infty \sum_{k=0}^\infty C_p^k \left(\frac{u}{x}\right)^{p-k}   \nu(x,du), 
$$
implying  after interchanging the sum and in the integral, the equality  
$$
I_{11}(x)+ I_{12}(x) = 2x^p  \sum_{k=0}^\infty
\int_x^\infty  C_p^{2k}\left(\frac{u}{x}\right)^{p-2k}   \nu(x,du),
$$
Since  $p<\sigma$, 
$$
u^{p-2k} N(x,u) \leq \frac{ N_\sigma N(x,1)}{u^{2k+\sigma-p}} \to 0, \quad u\to \infty, 
$$
 for any fixed $x$, $|x|\geq c$, and  thus 
$$
\int_x^\infty u^{p-2k} \nu(x,du)= x^{p-2k} N(x)+ (p-2k)\int_x^\infty u^{p-2k-1}N(x,u)du, \quad k\geq 0. 
$$
Therefore, interchanging the series and  the integral, we get 
\begin{align*}
	I_{11}(x)+ I_{12}(x) &=2 x^p  \sum_{k=0}^\infty C_p^{2k}  x^{2k-p} \left( x^{p-2k} N(x)+ (p-2k)\int_x^\infty \frac{N(x,u)}{u^{2k+1-p}} du \right)\\
	&= 2x^p N(x)\sum_{k=0}^\infty C_p^{2k} \left( 1+ (p-2k) x^{2k-p}\int_x^\infty \frac{1}{u^{2k+1-p}}\frac{N(x,u)}{N(x)} du
	\right)\\
	&= 2x^p N(x) \left(\sum_{k=0}^\infty C_p^{2k} + \sum_{k=0}^\infty  C_p^{2k}  (p-2k)\int_1^\infty \frac{1}{u^{2k+1-p}}\frac{N(x,xu)}{N(x)} du\right). 
\end{align*}
Consider now $I_{13}(x)$. As above, due to the symmetry of $\nu(x,du)$ and \eqref{bin2}, 
\begin{align*}
	I_{13}& = x^p \int_1^x  \left( 1- \frac{u}{x} \right)^p \nu(x,du)= x^p \int_1^x \sum_{k=0}^\infty  C_p^k \left( \frac{u}{x}\right)^k (-1)^k \nu(x,du). 
\end{align*}
Analogounsly, 
\begin{align*}
	I_{14}& = x^p \int_1^x  \left( 1+ \frac{u}{x} \right)^p \nu(x,du)= x^p \int_1^x \sum_{k=0}^\infty  C_p^k \left( \frac{u}{x}\right)^k  \nu(x,du),
\end{align*}
implying that 
$$
I_{13}+ I_{14} = 2x^p \int_1^x \sum_{k=0}^\infty  C_p^k \left( \frac{u}{x}\right)^{2k}  \nu(x,du)= 
 2x^p \int_1^x \sum_{k=1}^\infty  C_p^k \left( \frac{u}{x}\right)^{2k}  \nu(x,du)+ 2x^p\int_1^x \nu(x,du), 
$$
which leads to 
$$
I_{13}+ I_{14} + I_{15}= 2x^p \int_1^x \sum_{k=1}^\infty  C_p^k \left( \frac{u}{x}\right)^{2k}  \nu(x,du) - 2 x^p N(x). 
$$
Using integration by parts, we get 
\begin{align*}
	I_{13}+I_{14}+I_{15} & =  2x^p  
\sum_{k=1}^\infty  C_p^{2k} x^{-2k}\left( -x^{2k}  N(x)+ N(x,1) + 2k \int_1^x u^{2k-1} N(x,u)du \right)- 2 x^p N(x)\\
&= 2x^p N(x) \left(- \sum_{k=0}^\infty C_p^{2k} + \sum_{k=1}^\infty  C_p^{2k}  \left( \frac{N(x,1)}{N(x)x^{2k} } + \frac{2k}{x^{2k}} \int_1^x u^{2k-1} \frac{N(x,u)}{N(x)}du\right) \right)\\
&= 2x^p N(x) \left(- \sum_{k=0}^\infty C_p^{2k} + \sum_{k=1}^\infty  C_p^{2k}  \left( \frac{N(x,1)}{N(x)x^{2k} } + 2k \int^1_{1/x} u^{2k-1} \frac{N(x,xu)}{N(x)}du\right) \right). 
\end{align*}
Thus,  
\begin{equation}\label{I1}
	\begin{split}
	I_1(x)&\leq  2x^p \sum_{k=1}^\infty C_p^{2k}  \left(\frac{N(x,1)}{x^{2k} }+  (p-2k)\int_1^\infty \frac{N(x,xu)}{u^{2k+1-p}} du+
	2k \int_{1/x}^1 u^{2k-1} N(x,xu)du  \right)\\
	&\qquad  + 2 x^p p \int_1^\infty u^{p-1} N(x,xu) du. \normal
	\end{split}
\end{equation}
%Note that $C_p^{2k}>0$ for all $k\geq 1$, and non-integer positive  $p\in (2\ell-1, 2\ell)$, $\ell\geq 1$ (case B), and $C_p^{2k}<0$ if $p\in (2\ell, 2\ell+1)$, $\ell\geq 1$ (case A).

%Consider first case A. 
%Further, suppose that $2k_0 -1<\sigma<  2 k_0$,  and $p$ close enough to $\sigma$. Then  $C_p^{2k} (p-2k)>0$ for $k\leq k_0$ and  $C_p^{2k} (p-2k)<0$ for $k>  k_0$.  Then  

Note that for $p\in (1,2)$ we have $C_p^{2k}>0$. Then 
\begin{equation*}\label{I01}
	\begin{split}
\limsup_{x\to \infty} \frac{I_1(x)}{\phi^{(1)}(x)} &\leq 
\limsup_{x\to \infty}2   \sum_{k=1}^\infty  C_p^{2k} \left(  (p-2k)\int_1^\infty \frac{ x^\delta N(x,xu)}{u^{2k+1-p}} du+
2k \int_{1/x}^1 u^{2k-1} x^\delta N(x,xu)du  \right)\\
& \qquad   +2p N_\sigma\int_1^\infty u^{p-\sigma-1}du. 
\end{split}
\end{equation*}. 
   If $\sigma<\delta<2$, then all the terms in the sum $\sum_{k=1}^\infty (\dots)$  vanish, and we have 
\begin{equation}
	\limsup_{x\to \infty} \frac{I_1(x)}{\phi^{(1)}(x)}\leq 
	\frac{2pN_\sigma}{\sigma-p}. 
\end{equation}

If $\sigma=\delta<2$, we have 
\begin{equation}
	\limsup_{x\to \infty} \frac{I_1(x)}{\phi^{(1)}(x)}\leq 
	2\sum_{k=1}^\infty C_p^{2k} \left(\frac{N_{\delta} 2k}{2k-\delta} - \frac{N_\delta (2k-p)}{2k-p+\delta}\right)  + \frac{2pN_\sigma}{ \sigma-p}.
	% \\
	%&=  \frac{2}{C_{inf}} \sum_{k=1}^\infty C_p^{2k} \frac{\delta(4k-p)}{(2k-p+ \delta)(2k-\delta)}+ \frac{2p}{C_{inf}(\sigma-p)}. 
\end{equation}
Thus, we get 
$$
\limsup_{x\to \infty} 	\frac{I_1(x) }{\phi^{(1)}(x)} \leq  C^{(1)}. 
$$
which implies \eqref{limV} for $i=1$. 

\emph{ Case (2).}  If $\sigma=2=\delta$ we have 
\begin{equation}\label{sig1}
	\lim_{x\to \infty} \frac{I_2(x)}{ \phi^{(2)}(x)}=0.  \normal
\end{equation}
Consider now $I_1(x)$. Due to the $\ln x$ in the definition of $\phi^{(2)}$, all terms vanish except the following for $k=1$: 
$$
\limsup_{x\to \infty} \frac{x^{2}}{\ln x} \int_{1/x}^1 u^{2k-1} N(x,xu)du \leq N_{\delta}. 
$$
Therefore, in this case we have 
\begin{equation}
	\limsup_{x\to \infty} \frac{I_1(x)}{\phi^{(2)}(x)}\leq 
	4 C_p^2 N_{\delta}= 2p(p-1)N_{\delta}. 
\end{equation}

\emph{ Case (3).} Recall that in this case $\phi^{(3)}(x) = x^{p-2}$. 
Then 
\begin{equation}\label{I02}
	\limsup_{x\to \infty}  \frac{I_2(x)}{\phi^{(3)}(x)}\leq   	\limsup_{x\to \infty}  \frac{p(p-1)\nu_{small}(x)x^{p-2}}{2\phi^{(3)}(x)} = \frac{p(p-1)\nu_{small}}{2}.
\end{equation}
Consider now $I_1(x)$.
Since $\delta>2$, we have 
$$
\limsup_{x\to \infty} x^2 \int_1^\infty \frac{N(x,xu)}{u^{2k+1-p}}du =0. 
$$
If  $\delta \neq 2m$ for some $m\geq 2$, then
\begin{align*}
	\limsup_{x\to \infty} x^2 \int_{1/x}^1 u^{2k-1} N(x,xu)du & \leq  \frac{N_\delta}{2k-\delta}\limsup_{x\to \infty} 
	\left( x^{2-\delta}- x^{2-2k}\right)=\frac{N_{\delta}}{\delta-2}. 
\end{align*}

If $\delta = 2m$, we  also have 
$$
\limsup_{x\to \infty} x^2 \int_{1/x}^1 u^{2k-1} N(x,xu)du \leq N_\delta\lim_{x\to \infty} x^{2-2m}\ln{x}=0. 
$$

Thus, from above 
\begin{align*}
	\limsup_{x\to \infty} \frac{I_1(x)}{\phi^{(3)}(x)}  &\leq  p(p-1)\left(N_{max}+ \frac{2N_{\delta}}{\delta - 2}\right) \normal + \limsup_{x\to \infty}  2p N_\sigma x^{2-\sigma}\int_1^\infty u^{p-1-\sigma }\\
	&  = p(p-1)\left( N_{max} + \frac{2N_{\delta}}{\delta - 2}\right) \normal
	+ \frac{2p N_\sigma\I_{\sigma =2}}{\sigma -p}. 
\end{align*}

\emph{ Case (4).} Finally, from above it follows that in the case $\sigma<2$, $\delta\geq 2$ we have 
$$
	\limsup_{x\to \infty} \frac{I_2(x)}{\phi^{(1)}(x)}\leq 0 
$$
and
$$
   \limsup_{x\to \infty} \frac{I_1(x)}{\phi^{(1)}(x)}\leq \frac{2pN_\sigma}{\sigma-p}.
$$
\normal
\end{proof}

\begin{proof}[Proof of Corollary~\ref{c2}]
 Take  $f(|x|^p)\equiv f^{(i)}(|x|^p):= C |x|^{p(1+(\kappa-1)/p)}$  for $1\leq i\leq 4$, where $C<pA_\kappa$. 
  For $i=1$  or $i=4$ the inequality  \eqref{t1e1}  holds if $\kappa+\sigma>1$; in this case the $\limsup$  in \eqref{t1e1}  is  equal to $-\infty$. 
 For $i=3$ the inequality  \eqref{t1e1}  holds if $\kappa+1>0$, which however is true if \eqref{bal1} is satisfied,  because we initially have $\sigma>1$. Finally, for $i=2$  the inequality \eqref{t1e1} is satisfied, provided that $\frac{|x|^{\kappa+1}}{\ln |x|} \to  \infty$ as $|x|\to \infty$.  The only critical case $\kappa = -1$ is excluded, because for $i=2$ we have $\sigma=2$, and thus \eqref{bal1} is violated.  
\end{proof}

\begin{proof}[Proof of Theorem~\ref{t2}] 
 	Consider $V(x) = e^{\beta \phi^{1+\zeta}(x)}$ for some $ \zeta \in (-1,0]$, $\beta<\alpha$, where $\phi(x)$ is defined in \eqref{phi}.  Since we assumed \eqref{ExAl}, we  modify a bit the operator $\LL$: 
$$
\LL V(x) = \left(a(x) + \int_{|u|\geq 1} u \nu(x,du)\right) V'(x) + \int_\real \left( V(x+u)  - V(x) - V'(x) u \right) \nu(x,du). 
$$ 
Recall that we assumed that $\nu(x,du)$ is symmetric in $u$; therefore, the integral term in the coefficient near $V'(x)$ is 0.\normal

	First we show  that 
	\begin{equation}\label{exp2-1}
		\limsup_{|x|\to \infty} \frac{\LL_0 V(x)}{\phi^{(5)}(x)} \leq C^{(5)},  
	\end{equation}
	where $\phi^{(5)}$ and $C^{(5)}$ are defined in  \eqref{phi5c5},  and $\LL_0$ now is  defined by 
	 
	$$
	 \LL_0 V(x) = \int_\real \left( V(x+u)  - V(x) - V'(x) u \right) \nu(x,du). 
	$$ 
	
	Assume that $x>1$ is big enough, and let for $g(u)= e^{\beta |u|^{1+\zeta}}$
	\begin{equation}
		I_1 (x) = \int_{|u|\geq 1} \left( g(x+u)- g(x) - g'(x) u\right)  \nu(x,du)
	\end{equation}
 	and split
	\begin{align*}
		\mathcal{L}_0  V(x) & = \left(\int_{|u|< 1}+ \int_{|u|\geq  1}  \right)\left(V(x+u) -V(x) - V'(x) u \right)\nu(x,du)\pm I_1 (x)\\
		& = \left( \int_{|u|<  1} +\int_{1\leq |u| \le \epsilon x }+ \int_{|u|>\epsilon x} \right) \left( g(x+u)- g(x) - g'(x) u\right)  \nu(x,du) \\
		&\quad  + \int_{|u|>  \epsilon x } \left(V(x+u) - e^{\beta|x+u|^{ 1+  \zeta}}\right) \nu(x,du) \\ 
		& =  J_1(x) + J_2(x) +J_3 + J_4(x), 
	\end{align*}
where $\epsilon>0$	is such that $\beta+\epsilon<\alpha$. Using the Taylor formula for $g(u)$  we get for some $\theta\in (0,1)$
\begin{equation}\label{Taylor1}
	\begin{split}
|g(x+u)- g(x) - g'(x) u|& \leq 2^{-1}|g''(x+\theta u )| u^2 \\
& =  2^{-1} \beta (1+\zeta) (x+ u \theta)^{2\zeta}  \left( \beta  (1+\zeta) +  \zeta  (x+\theta u)^{-1-\zeta}\right)\int_{|u|\leq 1}u^2 \nu(x,du), 
\end{split}
	\end{equation}
implying   for $|u|\leq 1$
	\begin{equation}\label{Ex2J1}
		\begin{split}
		J_1(x) &  \leq 2^{-1} \beta (1+\zeta) (x-1)^{2\zeta}V(x+1) \left( \beta  (1+\zeta) +  \zeta  (x-1)^{-1-\zeta}\right)\int_{|u|\leq 1}u^2 \nu(x,du)\\
		& \leq  C_1 (x-1)^{2\zeta}V(x+1) \left(1+o(1)\right).   
		\end{split}
	\end{equation}
where  in the last line we used that $|x+u|^{1+\zeta} \leq x^{1+\zeta} + |u|^{1+\zeta} \leq  x^{1+\zeta} +1$ (cf. \eqref{Cinf})
$$
C_1= 2^{-1} e^\beta \beta^2 (1+\zeta)^2\nu_{small}. 
$$
The argument for $J_2$ is similar. Using \eqref{Taylor1} and that $1<|u|\leq \epsilon x$  we derive
\begin{align*}
\left| g(x+u)- g(x) - g'(x) u \right|
& \leq 2^{-1}(1-\epsilon)^{2\zeta}\beta^2 (1+\zeta)^2  x^{2\zeta} V(x)e^{\beta|u|^{1+\zeta}} u^2 (1+o(1)),  
\end{align*}
implying that
\begin{equation}\label{J2-exp}
	J_2(x) \leq C_2 x^{2\zeta} V(x)(1+o(1)),  
\end{equation}
where 
$$
C_2:= 2^{-1}\beta^2 (1+\zeta)^2  c_0 \nu_{\alpha,\zeta,large},
$$
where $c_0$ is such that $x^2e^{\beta x^{1+\zeta}}\leq c_0 e^{\alpha x^{1+\zeta}}$, $x>1$, and we used that $(1-\epsilon)^{2\zeta}\leq 1$. For $J_3$ we have 

\begin{align*}
	J_3(x) & \leq \int_{|u|\geq \epsilon x} e^{\beta |x + u|^{1+\zeta}}\nu(x,du) 
	\leq V(x) \int_{|u|\geq \epsilon x} e^{- (\alpha - \beta)  |u|^{1+\zeta}+ \alpha |u|^{1+\zeta}}\nu(x,du)\\
	& \leq V(x) e^{-(\alpha -\beta)(\epsilon x)^{1+\zeta}}\int_{|u|\geq 1} e^{\alpha |u|^{1+\zeta}} \nu(x, du)\\
	& \leq  \nu_{\alpha,\zeta,large} V(x) e^{-(\alpha -\beta)(\epsilon x)^{1+\zeta}}. 
\end{align*}
Finally,
\begin{align*}
	J_4 (x) & \leq  \int_{|u|\geq \epsilon x} e^{\beta |x+u|^{1+\zeta}}\nu(x,du)\leq \nu_{\alpha,\zeta,large} V(x) e^{-(\alpha -\beta)(\epsilon x)^{1+\zeta}}. 
\end{align*}
Thus, we have \eqref{exp2-1} with 
$$
C^{(5)}  = C_1+C_2. 
$$
Consider now the drift term.  We have  
	\begin{equation}
		\limsup_{x\to \infty}  \frac{ a(x) V'(x) }{\phi^{(5)}}= 	\limsup_{x\to \infty} \frac{ a(x) \beta (1+\zeta)   x^\zeta V(x)}{\phi^{(5)}(x)} \leq - \beta (1+\zeta)  A_{\kappa }. 
	\end{equation}
Summarizing, we get \eqref{erg-exp02}.  The restriction $\zeta\in (-1,\kappa]$  comes from the following argument. The orders of the drift, the function $f(V(x))$  and of the integral terms are, respectively, $x^{\kappa +\zeta} V(x)$,  $x^{\kappa +\zeta} V(x)$  and $x^{2\zeta} V(x)$. Since for the integral term we have the positive bound and the function $f(V(x))$ is positive, we need the drift to dominate. Therefore, we need the restriction $\kappa+\zeta> 2 \zeta$, which implies the necessary  bound for $\zeta$.
\end{proof}

The proof of Corollary~\ref{c4} follows in the same way as that of Corollary~\ref{c2}. 
 
\section{Application}
Consider one simple application of Theorems~\ref{t1} and \ref{t2}.

Suppose we want to estimate the value of the functional $ \pi^x(u):=  \Ee^\pi u(X)$,  where $X$ is a random variable with probability distribution $\pi$. Knowing  the order of convergence, we can estimate $\pi^x(u)$ with a given precision choosing $t$ large enough according to \eqref{erg2}: 
\begin{equation}
	|T_t u(x) - \pi^x(u)|\leq \sup_{z\in \real}|u(z)|	\|P_t(x,\cdot)-\pi(\cdot)\|_{TV} \leq  C_u C_{erg}(x) \psi(t).
\end{equation}

In \cite{BS09} it was suggested to approximate $X_t$ by a Markov chain $(Y_n(t))_{n\geq 1}$. 
The advantage of this approach is that the chain $(Y_n(t))_{n\geq 1}$ is relatively easy to simulate.  Let us briefly explain the construction. 

Suppose that $x$ in \eqref{L1} is fixed and define the operator $L$ from \eqref{L1} by $L^x$. Then $(L^x, C_0^\infty(\real))$ extends to the generator of a L\'evy process $Y^x$ with the  characteristic function 
$$
\Ee e^{i \xi Y_t^x} = e^{- t q(x,\xi)}, \quad \xi \in \real, 
$$
where  (see the monograph \cite{Ja01} for the detailed explanations)
$$
q(x,\xi) := -i a(x) \xi + \int_{\real \backslash\{0\}}  \left( 1-e^{i \xi u }+ i\xi  u\I_{|u|\leq 1} \right)\nu(x,du). 
$$
Define the family of probability measures  $(\mu_{x,\frac{1}{n}})_{n\geq 1}$ by 
$$
	\int_\real  e^{i\xi y} \mu_{x,\frac{1}{n}} (dy)= e^{i \xi x-\frac{1}{n} q(x,\xi) }. 
$$
Then we define a Markov chain $(Y_n(k))_{n\geq 1}$  with the transition kernel $(\mu_{x,\frac{1}{n}})_{n\geq 1}$, and  let
$$
W_{\frac1n} u(x) := \int_\real u(y)\mu_{x,\frac{1}{n}}(dy). 
$$
Then (cf. \cite{BS09})
\begin{equation}
	\sup_{t\in [a,b]} \| W_{\frac1n}^{[nt]} u - T_t u \|_{\infty}= 0, 
\end{equation}
where $\|u\|_\infty := \sup_x |u(x)|$, and the rate of convergence is of order $n^{-\frac12}$, which follows from Lemma~6.4 and  Theorems~6.1 and 6.5 \cite{EK86}.

Note that $W_{\frac1n}^{[nt]} u (x) = \Ee^x u(Y_n ([nt]))$, and finally applying the Monte-Carlo method, we derive 
\begin{equation}
	\pi^x(u)\approx \frac{1}{N} \sum_{i=1}^N u\left(Y_n^{(i)} ([nt])\right), 
\end{equation}
where 
 $Y_n^{(i)} ([nt])$, $1\leq i \leq N$ are independent copies of $Y_n([nt])$, and $Y_n$ starts at $x$.  Thus, the approach reduces to the simulation of the Markov chain with a given kernel. Some particular examples were treated in \cite{B10}, but the methodology of simulation of a general  L\'evy process heavily depends on the structure of the kernel $\nu(x_0,du)$  (here $x_0$ is fixed). We postpone this problem for future  research.

\end{document}